\newtheorem{thm}{Theorem}[section]
\newtheorem{defi}{Definition}[section]
\newtheorem{Note}{Note}[section]
\newcommand{\be}{\begin{equation}}
\newcommand{\ee}{\end{equation}}
\numberwithin{equation}{section}
\newcommand{\bea}{\begin{eqnarray}}
\newcommand{\eea}{\end{eqnarray}}
\newcommand{\beb}{\begin{eqnarray*}}
\newcommand{\eeb}{\end{eqnarray*}}
\begin{document}
%%%%%%%%%%%%%%%%%%%%%%%%%%%%%%%%%%%%%%%%%%
\title{Paracompactness in a bispace}
\author{Amar Kumar Banerjee$^{1}$ and Rahul Mondal$^{2}$}
\address{\noindent\newline Department of Mathematics,\newline The University of 
Burdwan, \newline Golapbag, Burdwan-713104,\newline West Bengal, India.}   
\email{akbanerjee@math.buruniv.ac.in, akbanerjee1971@gmail.com}
\email{imondalrahul@gmail.com}
%%%%%%%%%%%%%%%%%%%%%%%%%%%%%%%%%%%%%%%%%%%%%%%%%%%%%%
\begin{abstract}
The idea of pairwise paracompactness was studied by many authors in a bitopological space. Here we study the same in the setting of more general structure of a bispace using the thoughts of the same given by Bose et al \cite{MKBOSE}.
%%%%%%%%%%%%%%%%%
\end{abstract}
%%%%%%%%%%%%%%%%%%%%%%%%%%%%%%%%%%%%%%%%%
\noindent\footnotetext{$\mathbf{2010}$\hspace{5pt}AMS\; Subject\; Classification: 54A05, 54E55, 54E99.\\ 
{Key words and phrases: $\sigma$-space, bispace, refinement, parallel refinement, locally finiteness, pairwise paracompactness.}}
\maketitle
%%%%==================================================
\vspace{0.5in}
%%%%================================================== 
\section{\bf{Introduction}}
%%%%%%%%%%%%%%%%%%%%%%%%%%%%
The idea of paracompactness given by $\text{Dieudonn}e^{'}$ in the year 1944 came out as a generalization of the notion of compactness. It has many implication in field of differential geometry and it plays important roll in metrization theory. 
The concept of the Alexandroff space \cite{ASF} $($i.e., a $\sigma$-space or simply a space$)$ was introduced by A. D. Alexandroff in the year 1940 as a generalization of a topological space where the union of open sets were taken to be open for only countable collection of open sets instead of arbitrary collection. 
Another kind of generalization of a topological space  is the idea of a bitopological space introduced by J.C. Kelly in \cite{JCK}. Using these ideas Lahiri and Das \cite{BN} introduced the idea of a bispace as a generalization of a $\sigma$-space. More works on topological properties were carried out by many authors $( $\cite{WJP}, \cite{ILR}, \cite{JW} etc.$)$ in the setting of a bitopological space and in the setting of a bispace $($\cite{AKB2}, \cite{AKB3}, \cite{AKB4}, \cite{RM}, \cite{RM3} etc.$)$. Datta \cite{MCD} studied the idea of paracompactness in a bitopological space and  tried to get analogous results of topological properties given by Michael \cite{Michael} in respect of paracompactness. In 1986 Raghavan and Reilly \cite{TGR} gave the idea of paracompactness in a bitopological space in another way. Later in 2008 M. K. Bose et al \cite{MKBOSE} studied the same in a bitopological space as a generalization of pairwise compactness. Here we study the idea of pairwise paracompactness using the thoughts given by Bose et al \cite{MKBOSE} and investigate some results of it in the setting of more general structure a bispace.
%%%%%%==================================================	
%%%%%%%%%%%%%%%%%%%%%%%%%%%%%%%%%%%%%%%%
\section{\bf{Preliminaries}}\label{preli}
%%%%%%%%%%%%%%%%%%%%%%%%%%%%%%%%%%%%%%%
\begin{defi}\cite{ASF} 
A set $X$ is called an Alexandroff space or $\sigma $- space or simply space if in it is chosen a system $\mathcal {F}$ of subsets of $X$, satisfying the following axioms\\
$\left(i\right)$The intersection of countable number sets in $\mathcal {F}$ is a set in $\mathcal {F}$.\\
$\left(ii\right)$The union of finite number of sets from $\mathcal {F}$ is a set in $\mathcal {F}$.\\
$\left(iii\right)$The void set and $X$ are in $\mathcal {F}$.\end{defi}
Sets of $\mathcal F$ are called closed sets. There complementary sets are called open.It is clear that instead of closed sets in the definition of a space, one may put open sets with  subject to the conditions of  countable summability, finite intersectability and the condition that $X$ and the void set should be open.\\ 
%%%%%%%%%================================
The collection of such open will sometimes be denoted by $\mathcal{P}$ and the space by $(X, \mathcal{P})$.
It is noted that $\mathcal{P}$ is not a topology in general as can be seen by taking $X=\mathbb{R}$, the set of real numbers and $\tau$ as the collection of all $F_\sigma$ sets in $\mathbb{R}$.\\
%%%%==================================================
%%%%%%%%%%%%%%%%%%%%%%%%%%%%%%%%%%%%
\begin{defi}\cite{ASF} 
To every set $M$ we correlate its closure $\overline M$ as the intersection of all closed sets containing M.\end{defi}  
Generally the closure of a set in a $\sigma$-space is not a closed set. We denote the closure of a set $M$ in a space $(X, \mathcal{P})$ by $\mathcal{P}$-cl$(M)$ or cl$(M)$ or simply $\overline M$ when there is no confusion about $\mathcal{P}$.
%%%%==================================================
The idea of limit points, derived set, interior of a set etc. in a space are similar as in the case of a topological space which have been thoroughly discussed in \cite{PD}.
%%%%==================================================
\begin{defi} \cite{AKB}
Let $(X, \mathcal{P})$ be a space. A family of open sets $B$ is said to form a base $($open$)$ for $\mathcal{P}$ if and only if every open set  can be expressed as countable union of members of $B$.\end{defi}
%%%%==================================================
\begin{thm} \cite{AKB}
 A collection of subsets $B$ of a set $X$ forms an open base of a suitable space structure $\mathcal{P}$ of $X$ if and only if \\
\noindent 1) the null set $\phi \in B$\\
\noindent 2) $X$ is the countable union of some sets belonging to $B$.\\
\noindent 3) intersection of any two sets belonging to $B$ is expressible as countable union of some sets belonging to $B$.\\
\end{thm}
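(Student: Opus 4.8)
The plan is to establish the two directions of the equivalence separately: the forward direction is essentially a direct unwinding of the definition of a base together with the axioms of a space, while the reverse direction requires an explicit construction of $\mathcal{P}$ from $B$.

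For the \emph{necessity} of (1)--(3), I would assume $B$ is an open base for some space structure $\mathcal{P}$ on $X$. Since $\phi$ and $X$ are open and, by definition of a base, every open set is a countable union of members of $B$, expressing $\phi$ in this way (over a nonempty countable index set, as the wording ``some sets belonging to $B$'' indicates) forces every term of that union to be empty, hence $\phi\in B$, giving (1); the representation of $X$ is exactly (2). For (3), given $b_1,b_2\in B$ both are open, so by the finite-intersection axiom for $\mathcal{P}$ the set $b_1\cap b_2$ is open, and therefore it is a countable union of members of $B$.

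For the \emph{sufficiency}, given $B$ satisfying (1)--(3) I would define
\[
\mathcal{P}=\Big\{\,\bigcup_{n\in\mathbb{N}} b_n \;:\; b_n\in B \text{ for each } n\,\Big\},
\]
the collection of all countable unions of members of $B$, and verify that $\mathcal{P}$ satisfies the open-set formulation of the axioms of a space. That $\phi\in\mathcal{P}$ is immediate from (1) (and $\phi\in B\subseteq\mathcal{P}$ anyway), and $X\in\mathcal{P}$ is precisely (2). Closure under countable unions holds because a countable union of sets, each itself a countable union of members of $B$, is indexed by a countable set and so again lies in $\mathcal{P}$. The only step that uses (3) is closure under finite intersections: for $G=\bigcup_n b_n$ and $H=\bigcup_m c_m$ with $b_n,c_m\in B$ one has $G\cap H=\bigcup_{n,m}(b_n\cap c_m)$, and replacing each $b_n\cap c_m$ by its countable representation furnished by (3) displays $G\cap H$ as a countable union of members of $B$. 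Finally, by construction every member of $\mathcal{P}$ is a countable union of members of $B$, and $B\subseteq\mathcal{P}$, so $B$ is an open base for this $\mathcal{P}$.

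The verification of the axioms is routine; the only places calling for a little care are the bookkeeping that a countable union of countable families is still countable (needed both for the union axiom and for the intersection computation to stay inside $\mathcal{P}$), and the convention on the empty/singleton union used to extract $\phi\in B$ in the necessity direction. I do not anticipate a genuine obstacle here, and since the statement only asks for a \emph{suitable} space structure, no uniqueness claim about $\mathcal{P}$ needs to be addressed.
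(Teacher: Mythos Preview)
The paper does not actually prove this theorem: it is quoted (with the citation \cite{AKB}) as a known result and no argument is supplied, so there is no ``paper's own proof'' to compare against. Your proposal is the standard argument for this type of base-characterization and is correct in outline; the only delicate point you already flagged is the extraction of $\phi\in B$ in the necessity direction, which indeed relies on the convention that ``countable union of some sets belonging to $B$'' means a union over a nonempty index set (and on the clause in Definition~2.3 that $B$ is a family of \emph{open} sets, so that $B\subset\mathcal{P}$ and condition~(3) follows from the finite-intersection axiom).
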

%%%%%%%%%%%%%%%%%%%%%%%%%%%%%%%%%%%%
\begin{defi} \cite{BN}
Let $X$ be a non-empty set. If $\mathcal{P}$ and $\mathcal{Q}$ be two collection of subsets of $X$ such that $(X,\mathcal{P})$ and $(X,\mathcal{Q})$ are two spaces, then $X$ is called a bispace. 
\end{defi}
%%%%%%%%%%%%%%%%%%%%%%%%%%%%%%%%%%%%
\begin{defi} \cite{BN} 
A bispace $(X,\mathcal{P},\mathcal{Q})$ is called pairwise $T_{1}$ if for any two distinct points $x$, $y$ of $X$, there exist $U\in \mathcal{P}$ and $V\in\mathcal{Q}$ such that $x\in U$, $y\notin U$ and $y\in V$, $x\notin V$.
\end{defi}
%%%%%%%%%%%%%%%%%%%%%%%%%%%%%%%%%%%
\begin{defi} \cite{BN}
A bispace $(X,\mathcal{P},\mathcal{Q})$ is called pairwise Hausdorff if for any two distinct points $x$, $y$ of $X$, there exist $U\in\mathcal{P}$ and $V\in\mathcal{Q}$ such that $x\in U$, $y\in V$, $U\cap V=\phi$. 
\end{defi}
%%%%==================================================
%%%%%%%%%%%%%%%%%%%%%%%%%%%%%%%%%%%%%%
\begin{defi} \cite{BN} 
In a bispace $(X,\mathcal{P},\mathcal{Q})$, $\mathcal{P}$ is said to be regular with respect to $\mathcal{Q}$ if for any $x\in X$ and a $\mathcal{P}$-closed set $F$ not containing $x$, there exist $U\in \mathcal{P}$, $V\in\mathcal{Q}$ such that $x\in U$, $F\subset V$, $U\cap V=\phi$. $(X,\mathcal{P},\mathcal{Q})$ is said to be pairwise regular if $\mathcal{P}$ and $\mathcal{Q}$ are regular with respect to each other. 
\end{defi}
%%%%%%%%%%%%%%%%%%%%%%%%%%%%%%%%%%%
\begin{defi} \cite{BN} 
A bispace $(X,\mathcal{P},\mathcal{Q})$ is said to be pairwise normal if for any $\mathcal{P}$-closed set $F_{1}$ and $\mathcal{Q}$-closed set $F_{2}$ satisfying $F_{1}\cap F_{2}=\phi$, there exist  $G_{1}\in \mathcal{P}$, $G_{2}\in \mathcal{Q}$ such that $F_{1}\subset G_{2}$, $F_{2}\subset G_{1}$, $G_{1}\cap G_{2}=\phi$.
\end{defi}
%%%%%%%%%%%%%%%%%%%%%%%%%%%%%%%%%%%%%%%
%%%%%%%%%%%%%%%%%%%%%%%%%%%%%%%%%%%%%%%%%%%%%%%%%%%%%%%%%%%%%%%%%%%%%%%%%%%%%%%%%%%%%%%%%%%%%%%%%%%%%%%%%%%%%%%%%%%%%%%%%%%%%%%%%%%
\section{\bf{Pairwise paracompactness}}
%%%%%%%%%%%%%%%%%%%%%%%%%%%%%%%%
%%%%%%%%%%%%%%%%%%%%%%%%%%%%%%%%
\noindent We called a space $($ or a set $)$ is bicompact \cite{BN} if every open cover of it has a finite subcover. Also similarly as \cite{BN} a cover B of $(X,\mathcal{P},\mathcal{Q})$ is said to be pairwise open if $B \subset \mathcal{P} \cup \mathcal{Q}$ and B contains at least one nonempty member from each of $\mathcal{P}$ and $\mathcal{Q}$. Bourbaki and many authors defined the term paracompactness in a topological space including the requirement that the space is Hausdorff. Also in a bitopological space some authors follow this idea. But in our discussion we shall follow the convention as adopted in Munkresh\cite{JRM} to define the following terminologies as in the case of a topological space.
%%%%%%%%%%%%%%%%%%%%%%%%%%%%%%%%
\begin{defi} (cf.\cite{JRM})
In a space $X$ a collection of subsets $\mathcal{A}$ is said to be locally finite in $X$ if every point has a neighborhood that intersects only a finitely many elements of $\mathcal{A}$.
%%%%%%%%%%%%%%%%%%%%%%%%%%%%%%%%%%%%%%%%
\end{defi} 
Similarly a collection of subsets $\mathcal{B}$ in a space $X$ is said to be countably locally finite in $X$ if $\mathcal{B}$ can be expressed as a countable union of locally finite collection. 
%%%%%%%%%%%%%%%%%%%%%%%%%%%%
\begin{defi} (cf.\cite{JRM})
Let $\mathcal{A}$ and $\mathcal{B}$ be two covers of a space $X$, $\mathcal{B}$ is said to be a refinement of $\mathcal{A}$ if for $B \in \mathcal{B}$ there exists a $A \in \mathcal{A}$ containing $B$.
\end{defi} 
%%%%%%%%%%%%%%%%%%%%%%%%%%%%%%%%%%%%%%%%%
We call $\mathcal{B}$ an open refinement of $\mathcal{A}$ if the elements of $\mathcal{B}$ are open and similarly if the elements of $\mathcal{B}$ are closed  $\mathcal{B}$ is said to be a closed refinement.
%%%%%%%%%%%%%%%%%%%%%%%%%%%%%%%%%%%%%%%%%%%
\begin{defi} (cf.\cite{JRM})
A space $X$ is said to be paracompact if every open covering $\mathcal{A}$ of $X$ has a locally finite open refinement $\mathcal{B}$ that covers $X$.
\end{defi}
%%%%%%%%%%%%%%%%%%%%%%%%%%%%%%%%%%%%%%%%%%
As in the case of a topological space \cite{MCD, MKBOSE} we define the following terminologies. Let $\mathcal{A}$ and $\mathcal{B}$ be two pairwise open covers of a bispace $(X,\mathcal{P},\mathcal{Q})$. Then $\mathcal{B}$ is said to be a parallel refinement \cite{MCD} of $\mathcal{A}$ if for any $\mathcal{P}$-open set$(\text{respectively } \mathcal{Q} \text{-open set})$ $B$ in $\mathcal{B}$ there exists a $\mathcal{P}$-open set$(\text{respectively } \mathcal{Q} \text{-open set})$ $A$ in $\mathcal{A}$ containing $B$. Let $\mathcal{U}$ be a pairwise open cover in a bispace $(X, \mathcal{P}_{1}, \mathcal{P}_{2})$. If $x$ belongs to $X$ and $M$ be a subset of $X$, then by ``$M$ is $\mathcal{P}_{\mathcal{U} x}$-open" we mean $M$ is $\mathcal{P}_{1}$-open$(\text{respectively } \mathcal{P}_{2} \text{-open set})$ if $x$ belongs to a $\mathcal{P}_{1}$-open set$(\text{respectively } \mathcal{P}_{2} \text{-open set})$ in $\mathcal{U}$.
%%%%%%%%%%%%%%%%%%%%%%%%%%%%%%%%%%%%%%%%%%%%
%%%%%%%%%%%%%%%%%%%%%%%%%%%%%%%%%%%%%%%%%%%%%%
\begin{defi} (cf. \cite{MKBOSE})
Let $\mathcal{A}$ and $\mathcal{B}$ be two pairwise open covers of a bispace $(X, \mathcal{P}_{1}, \mathcal{P}_{2})$. Then $\mathcal{B}$ is said to be a locally finite refinement of $\mathcal{A}$ if for each $x$ belonging to $X$, there exists a $\mathcal{P}_{\mathcal{A} x}$-open open neighborhood of $x$ intersecting only a finite number of sets of $\mathcal{B}$.
\end{defi}
%%%%%%%%%%%%%%%%%%%%%%%%%%%%%%%%%%%%%%%%%%
\begin{defi} (cf. \cite{MKBOSE})
A bispace $(X, \mathcal{P}_{1}, \mathcal{P}_{2})$ is said to be pairwise paracompact if every pairwise open cover of $X$ has a locally finite parallel refinement.
\end{defi}
%%%%%%%%%%%%%%%%%%%%%%%%%%%%%%%%%%%%%%
\noindent To study the notion of paracompactness in a bispace the idea of pairwise regular and strongly pairwise regular spaces plays significant roll as discussed below.\\
%%%%%%%%%%%%%%%%%%%%%%%%%%%%%%%%%%%%%%%%%%
\indent As in the case of a bitopological space a bispace $(X, \mathcal{P}_{1}, \mathcal{P}_{2})$ is said to be strongly pairwise regular\cite{MKBOSE} if $(X, \mathcal{P}_{1}, \mathcal{P}_{2})$ is pairwise regular and both the spaces $(X, \mathcal{P}_{1})$ and $(X, \mathcal{P}_{2})$ are regular.\\
%%%%%%%%%%%%%%%%%%%%%%%%%%%%%%%%%%%%%%%%%%%%
\indent Now we present two examples, the first one is of a strongly regular bispace and the second one is of a pairwise regular bispace without being a strongly pairwise regular bispace.\\ 
%%%%%%%%%%%%%%%%%%%%%%%%%%%%%%%%%%%%%%%%%%%%
\textbf{Example 3.1.}
Let $X=\mathbb{R}$ and $(x,y)$ be an open interval in $X$. We consider the collection $\tau_{1}$ with sets $A$ in $\mathbb{R}$ such that either $(x,y) \subset \mathbb{R} \setminus A$ or $A \cap (x,y)$ can be expressed as some union of open subintervals of $(x,y)$ and $\tau_{2}$ be the collection of all countable subsets in $(x,y)$. Also if $\tau$ be the collection of all countable union of members of $\tau_{1} \cup \tau_{2}$ then clearly $(X,\tau)$ is a $\sigma$-space but not a topological space. Also consider the bispace $(X, \tau, \sigma)$, where $\sigma$ is the usual topology on $X$. \\
\indent  We first show that $(X,\tau)$ is regular. Let $p \in X$ and $P$ be any $\tau$-closed set not containing $p$. Then $A =\{p \}$ is a $\tau$-open set containing $p$. Also $A =\{p \}$ is closed in $(X,\tau)$ because if $p \notin (x, y)$ then  $A^{c} \cap (x,y)= (x,y)$ and if $p \in (x, y)$ then $A^{c} \cap (x,y)= (x,p) \cup (p,y)$ and hence $A^{c}$ is a $\tau$-open set containing $P$. \\
 %%%%%%%%===========================================
\indent Now we show that the bispace $(X, \tau , \sigma)$ is pairwise regular. Let $p\in X$ and $M$ be a $\tau$-closed set not containing $p$. Then $A =\{p \}$ is a $\tau$-open set containing $p$ and also as every singleton set is closed in $(X,\sigma)$, $A^{c}$ is a $\sigma$-open set containing $M$.\\ 
 \indent Now let $p \in X$ and $P$ be a $\sigma$-closed set not containing $p$. Now consider the case when $P \cap (x,y)=\phi$ then $P$ is a $\tau$-open set containing $P$ and $P^{c}$ is a $\sigma$-open set containing $p$.\\
%%%%%%%%===================================
\indent Now we consider the case when $P \cap (x, y) \neq \phi$. Since $p \notin P$, $P^{c}$ is a $\sigma$-open set containing $p$ and hence there exists an open interval $I$ containing $p$ be such that $p \in I \subset P^{c}$ and $p \in \overline{I} \subset P^{c}$, where $\overline{I}$ is the closer of $I$ with respect to $\sigma$. If $I$ intersects $(x,y)$ then let $I_{1} = (x,y) \setminus \overline{I}$. Clearly $I_{1}$ is non empty because $ P \cap (x,y) \neq \phi$. Also $\overline{I} \subset P^{c}$ and hence $(x,y) \setminus P^{c} \subset (x,y) \setminus \overline{I}$ and its follows that $P \cap (x, y) \subset I_{1}$. So clearly $P \cup I_{1}$ is a $\tau$-open set containing $P$ and $I$ is a $\sigma$-open set containing $p$ and which are disjoint. Again if $I$ does not intersect $(x,y)$ then $P \cup (x,y)$ is a $\tau$-open set containing $P$ and $I$ itself a $\sigma$-open set containing $p$ and which are disjoint. Therefore the bispace $(X, \tau , \sigma)$ is strongly pairwise regular.\\ 
%%%%%%%%%%%%%%%%%%%%%%%%%%%%%%%%%%%%%%%%%%%%%%%
%%%%
\noindent \textbf{Example 3.2.}
Let $X= \mathbb{R}$ and $(X, \tau_{1}, \tau_{2})$ be a bispace where $(X, \tau_{1})$ is cocountable topological space and $\tau_{2}= \{ X, \phi \} \cup \{ \text{countable subsets of real numbers} \}$. Clearly $\tau_{2}$ is not a topology and hence $(X, \tau_{1}, \tau_{2})$ is not a bitopological space. We show that $(X, \tau_{1}, \tau_{2})$ is a pairwise regular bispace but not a strongly pairwise regular bispace. Let $p \in X$ and $A$ be a $\tau_{1}$-closed set not containing $p$. Then clearly $A$ itself a $\tau_{2}$-open set containing $A$ and $A^{c}$ is a $\tau_{1}$-open set containing $p$ and clearly they are disjoint.\\
\indent Similarly if $B$ is a $\tau_{2}$-closed set such that $p \notin B$, then $B$ being a complement of a countable set is $\tau_{1}$-open set containing $B$. Also $B^{c}$ being countable is $\tau_{2}$-open set containing $p$.\\
%%%%%%================================
\indent Now let $p \in X$ and $P$ be a closed set in $(X, \tau_{2})$ such that $p \notin P$. Then $P$ must be a complement of a countable set in $\mathbb{R}$ and hence it must be a uncountable set. So clearly the only open set containing $P$ is $\mathbb{R}$ itself. Therefore $(X, \tau_{2})$ is not regular and hence $(X, \tau_{1}, \tau_{2})$ can not be strongly pairwise regular.
%%%%%%%%%%%%%%%%%%%%%%%%%%%%%%%%%%%%%%%%%%%%%%
\begin{Note}
In a bitopological space, pairwise Hausdorffness and pairwise paracompactness together imply pairwise normality but similar result holds in a bispace if an additional condition C$(1)$ holds. 
\end{Note}
%%%%%%%%%%%%%%%%%%%%Th 3.1%%%%%%%%%%%%%%%%%%%%%%%%%%
\begin{thm}
Let $(X,\mathcal{P},\mathcal{Q})$ be a bispace, which is pairwise Hausdorff and pairwise paracompact and satisfies the condition C$(1)$ as stated below then it is pairwise normal.\end{thm}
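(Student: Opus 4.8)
The plan is to carry over to the bispace the classical chain ``Hausdorff $+$ paracompact $\Rightarrow$ regular $\Rightarrow$ normal'', running the pairwise paracompactness hypothesis twice and exploiting \emph{parallel} refinements to keep the two structures $\mathcal{P}$ and $\mathcal{Q}$ from getting entangled. Condition C$(1)$ is to enter only at the points where a $\sigma$-space fails to enjoy a property that every topological space has automatically.

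\emph{First step: $(X,\mathcal{P},\mathcal{Q})$ is pairwise regular.} I would show that $\mathcal{P}$ is regular with respect to $\mathcal{Q}$, the converse being symmetric. Given $x\in X$ and a $\mathcal{P}$-closed set $F$ with $x\notin F$ (the case $F=\phi$ being trivial), for every $y\in F$ pairwise Hausdorffness yields $A_y\in\mathcal{P}$, $B_y\in\mathcal{Q}$ with $x\in A_y$, $y\in B_y$, $A_y\cap B_y=\phi$; thus $A_y$ witnesses $x\notin\mathcal{P}$-cl$(B_y)$. The family $\mathcal{U}=\{B_y:y\in F\}\cup\{X\setminus F\}$ is a pairwise open cover of $X$; let $\mathcal{C}$ be a locally finite parallel refinement of $\mathcal{U}$. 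By the parallel property every $\mathcal{P}$-open member of $\mathcal{C}$ lies in $X\setminus F$, so each member of $\mathcal{C}'=\{C\in\mathcal{C}:C\cap F\neq\phi\}$ is $\mathcal{Q}$-open and contained in some $B_y$. Put $V=\bigcup\mathcal{C}'$; then $F\subset V$, and $V\in\mathcal{Q}$ by C$(1)$. Since $x\in X\setminus F$, a $\mathcal{P}$-open member of $\mathcal{U}$, local finiteness provides a $\mathcal{P}$-open $N\ni x$ meeting only finitely many $C_1,\dots,C_n\in\mathcal{C}'$; as $x\notin\mathcal{P}$-cl$(C_i)$ for each $i$, intersecting $N$ with suitable $\mathcal{P}$-open sets around $x$ produces a $\mathcal{P}$-open $U\ni x$ with $U\cap V=\phi$. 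Hence $\mathcal{P}$ is regular with respect to $\mathcal{Q}$.

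\emph{Second step: pairwise regularity and pairwise paracompactness give pairwise normality.} Let $F_1$ be $\mathcal{P}$-closed, $F_2$ be $\mathcal{Q}$-closed, $F_1\cap F_2=\phi$ (degenerate cases, e.g. one of them empty or equal to $X$, are immediate). For each $x\in F_2$ apply the regularity just obtained to the $\mathcal{P}$-closed set $F_1$: there are $U_x\in\mathcal{P}$, $V_x\in\mathcal{Q}$ with $x\in U_x$, $F_1\subset V_x$, $U_x\cap V_x=\phi$, so $\mathcal{Q}$-cl$(U_x)\cap F_1=\phi$. Now $\{U_x:x\in F_2\}\cup\{X\setminus F_2\}$ is a pairwise open cover; take a locally finite parallel refinement $\mathcal{W}$, let $\mathcal{W}'=\{W\in\mathcal{W}:W\cap F_2\neq\phi\}$ (each such $W$ is $\mathcal{P}$-open, inside some $U_x$), and set $G_1=\bigcup\mathcal{W}'$. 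Then $F_2\subset G_1$ and $G_1\in\mathcal{P}$ by C$(1)$. Arguing as in the first step, every point of $F_1$ has a $\mathcal{Q}$-open neighbourhood disjoint from $G_1$, so $F_1\cap\mathcal{Q}$-cl$(G_1)=\phi$; and $\mathcal{Q}$-cl$(G_1)$ is $\mathcal{Q}$-closed by C$(1)$. Then $G_2=X\setminus\mathcal{Q}$-cl$(G_1)\in\mathcal{Q}$ satisfies $F_1\subset G_2$, $G_1\cap G_2=\phi$, which is pairwise normality.

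\emph{The main obstacle.} All the set-theoretic housekeeping that is free in a topological space is unavailable in a $\sigma$-space: an arbitrary union of open sets need not be open, and the closure of a set need not be closed. The families $\mathcal{C}'$ and $\mathcal{W}'$ are locally finite but in general uncountable, so the three assertions ``$V\in\mathcal{Q}$'', ``$G_1\in\mathcal{P}$'' and ``$\mathcal{Q}$-cl$(G_1)$ is $\mathcal{Q}$-closed'' are exactly where the argument would collapse without help; verifying that condition C$(1)$ indeed secures a locally finite union of open sets to be open and the relevant closures to be closed in each of $(X,\mathcal{P})$ and $(X,\mathcal{Q})$ is the one genuinely non-routine point. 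Once C$(1)$ is in force, the remainder is the bitopological argument of Bose et al.\ \cite{MKBOSE} transcribed with ``$\mathcal{P}$-open/$\mathcal{Q}$-open'' in place of ``open''.
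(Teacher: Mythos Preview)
Your overall plan --- pairwise Hausdorff $+$ pairwise paracompact $\Rightarrow$ pairwise regular $\Rightarrow$ pairwise normal, via parallel refinements --- is exactly the paper's. The gap is in how you read and invoke condition C$(1)$.

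C$(1)$ is \emph{not} the assertion that a (locally finite) union of $\mathcal{P}$-open sets is $\mathcal{P}$-open, nor that closures are closed. It is a \emph{sandwich} condition: given a union $A$ of $\mathcal{P}$-open sets already lying inside an intersection $B$ of $\mathcal{Q}$-closed sets, one may insert a single $\mathcal{P}$-open set $K$ with $A\subset K\subset B$ (and symmetrically with $\mathcal{P}$, $\mathcal{Q}$ swapped). Your three invocations --- ``$V\in\mathcal{Q}$ by C$(1)$'', ``$G_1\in\mathcal{P}$ by C$(1)$'', and ``$\mathcal{Q}$-cl$(G_1)$ is $\mathcal{Q}$-closed by C$(1)$'' --- are therefore not justified as written: at the moment you claim $V\in\mathcal{Q}$ you have produced no outer set $B$ for C$(1)$ to act on, and C$(1)$ never promises that the union itself is open or that any closure is closed. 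Your closing paragraph, which identifies the ``non-routine point'' as checking that C$(1)$ makes locally finite unions open and closures closed, is precisely this misreading.

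The paper's remedy is to reverse the order of work and apply C$(1)$ only after both ingredients of the sandwich are in hand. In the regularity step it first builds the single $\mathcal{P}$-open neighbourhood $G\ni x$ (your $U$), checks that the union $H$ of the refinement members meeting $F$ satisfies $H\subset G^{c}$ (a single $\mathcal{P}$-closed set), and \emph{then} applies C$(1)$ to obtain a $\mathcal{Q}$-open $K$ with $F\subset H\subset K\subset G^{c}$. In the normality step it constructs on the side of $A$ a union $P=\bigcup_{x\in A}P_{x}$ of $\mathcal{P}$-open sets with $P\subset Q^{c}$ (an intersection of $\mathcal{Q}$-closed sets), applies C$(1)$ once to get a $\mathcal{P}$-open $R$ with $A\subset P\subset R\subset Q^{c}$, and then applies C$(1)$ a second time to $Q\subset R^{c}$ to obtain the $\mathcal{Q}$-open $S$. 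Your outline can be repaired along exactly these lines, but as it stands the three appeals to C$(1)$ do not deliver what you need.
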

\noindent C$(1):$ If $A\subset X$ is expressible as an arbitrary union of $\mathcal{P}$-open sets and $A \subset B$, $B$ is an arbitrary intersection of $\mathcal{Q}$-closed sets, then there exists a $\mathcal{P}$-open set $K$, such that $A \subset K \subset B$, the role of $\mathcal{P}$ and $\mathcal{Q}$ can be interchangeable.
%%%%%%%%%%%%%%%%%%%%%%%%%%%%%%%%%%%%%%%%%%%%
\begin{proof}
We first show that $X$ is pairwise regular. So let us suppose $F$ be a $\mathcal{P}$-closed set not containing $x \in X$. Since $X$ is pairwise Hausdorff for $\xi \in F$, there exists a $U_{\xi}\in \mathcal{P}$ and $V_{\xi}\in \mathcal{Q}$, such that $x \in U_{\xi}$ and $\xi \in V_{\xi}$ and $U_{\xi} \cap V_{\xi} = \phi$. Then the collection $\{V_{\xi}   : \xi \in F\} \cup (X \setminus F)$ forms a pairwise open cover of $X$. Therefore it has a locally finite parallel refinement $\mathcal{W}$. Let $H= \cup \{W \in \mathcal{W} : W\cap F\neq \phi \}$. Now $x\in X \setminus F$ and $X \setminus F$ is $\mathcal{P}$-open set and hence there exists a $\mathcal{P}$-open neighborhood $D$ of $x$ intersecting only a finite number of members $W_{1}, W_{2} \dots W_{n}$ of $\mathcal{W}$. Now if $W_{i} \cap F = \phi$ for all $n=1, 2 \dots n$, then $H \cap D = \phi$. Therefore by C$(1)$ we must have a $\mathcal{Q}$-open set $K$ such that $F \subset H \subset K \subset D^{c}$. Hence we have a $\mathcal{Q}$-open set $K$ containing $F$ and $\mathcal{P}$-open set $D$ containing $x$ with $D \cap K= \phi$. If there exists a finite number of elements $W_{p_{1}}, W_{p_{2}} \dots W_{p_{k}}$ from the collection $\{W_{1}, W_{2} \dots W_{n} \}$ such that $W_{p_{i}} \cap F \neq \phi$, then we consider $V_{\xi _{p_{i}}}$ such that $W_{p_{i}} \subset V_{\xi _{p_{i}}}$, $\xi _{p_{i}} \in F$ and $i=1, 2 \dots k$, since $\mathcal{W}$ is a locally finite parallel refinement of $\{V_{\xi}   : \xi \in F\} \cup (X \setminus F)$. Now if $U_{\xi_{p_{i}}}$ are the corresponding member of $V_{\xi_{p_{i}}}$, then $x \in D \cap (\bigcap^{n}_{i=1}U_{\xi_{p_{i}}})= G(say) \in \mathcal{P}$. Since $\mathcal{W}$ is a cover of $X$ it covers also $D$ and since $D$ intersects only finite number of members $W_{1}, W_{2} \dots W_{n}$, these $n$ sets covers $D$. Now since the members $W_{p_{1}}, W_{p_{2}} \dots W_{p_{k}}$ be such that $W_{p_{i}} \cap F \neq \phi$, we have $D \cap F \subset \bigcup_{i=1}^{k}W_{p_{i}}$. Now let $W_{p_{i}} \subset V_{\xi_{p_{i}}}$ for some $\xi_{p_{i}} \in F$ and consider $U_{\xi_{p_{i}}}$ corresponding to $V_{\xi_{p_{i}}}$ be such that $U_{\xi_{p_{i}}} \cap V_{\xi_{p_{i}}}= \phi$. Now we claim that $G \cap F = \phi$. If not let $y \in G \cap F = [D \cap (\bigcap^{n}_{i=1}U_{\xi_{p_{i}}})] \cap F= [D \cap F]\cap (\bigcap^{n}_{i=1}U_{\xi_{p_{i}}})$. Then $y \in D \cap F$ and hence there exists $W_{p_{i}}$ for some $i=1,2, \dots k$ such that $y \in W_{p_{i}} \subset V_{\xi_{p_{i}}}$. Also $y \in (\bigcap^{n}_{i=1}U_{\xi_{p_{i}}}) \subset U_{\xi_{p_{i}}}$ and hence $y \in  U_{\xi_{p_{i}}} \cap V_{\xi_{p_{i}}}$, which is a contradiction. So $G \cap F = \phi$. Now we have a $\mathcal{P}$-open neighborhood $G$ of $x$ intersecting only a finite number of members $W_{r_{1}}, W_{r_{2}} \dots W_{r_{k}}$ of $\mathcal{W}$ where $W_{r_{i}} \cap F= \phi$. So by similar argument there exists a $\mathcal{Q}$-open set $K$ such that $F \subset H \subset K \subset G^{c}$. Thus we have a $\mathcal{Q}$-open set $K$ containing $F$ and a $\mathcal{P}$-open set $G$ containing $x$ such that $G \cap K=\phi$. \\
%%%%%%%%%%%%%%%%%%%%
%%%%%%%%%%%%%%%%%%%%%%%%%%%%%%%%%%%%%%%%
\indent Next let $A$ be a $\mathcal{Q}$-closed set and $B$ be a $\mathcal{P}$-closed set and $A\cap B =\phi$. Then for every $x \in B$ and $\mathcal{Q}$-closed set $A$ there exists $\mathcal{P}$-open set $U_{x}$ containing $A$ and $\mathcal{Q}$-open set $V_{x}$ containing $x$ with $U_{x} \cap V_{x}= \phi$. Now the collection $\mathcal{U}= (X \setminus B) \cup \{V_{x} : x \in B \}$ forms a pairwise open cover of $X$. Hence there exists a locally finite parallel refinement $\mathcal{M}$ of $\mathcal{U}$. Clearly $B \subset Q$ where $Q= \cup \{ M \in \mathcal{M} : M\cap B \neq \phi \}$. Now for $x \in X \setminus B$, a $\mathcal{P}$-open set there exists a $\mathcal{P}$-open neighborhood of $x$ intersecting only a finite number of elements of $\mathcal{M}$. Since $A \subset X \setminus B$, so for $x \in A$ there exists a $\mathcal{P}$-open neighborhood $D_{x}$ of $x$ intersecting only a finite number of elements $M_{x_{1}}, M_{x_{2}} \dots M_{x_{n}}$ of $\mathcal{M}$ with $M_{x_{i}} \cap B\neq \phi$ for some $i=1,2,\dots n$. Suppose if $M_{x_{i}} \subset V_{x_{i}}$, $i=1, 2 \dots n$ and let $P_{x}= D_{x} \cap (\bigcap^{n}_{i=1}U_{x_{i}})$ where $U_{x_{i}} \cap V_{x_{i}}= \phi$. If $M_{x_{i}} \cap B= \phi$ for all $i=1,2,\dots n$, then we consider $P_{x}= D_{x}$. Now if $P= \bigcup \{P_{x} : x\in A\}$ then $A\subset P$ and $P \subset Q^{c}$.\\
%%%%%%%%%%%%%%%%%%%%%%%%%%%%%%%%%%%%%     
\indent	Now by the given condition C$(1)$ there exists a $\mathcal{P}$-open set $R$ be such that $A \subset P \subset R \subset Q^{c}$. Again by the same argument there exists a $\mathcal{Q}$-open set $S$ be such that $B \subset Q \subset S \subset R^{c}$. Hence there exists a $\mathcal{P}$-open set $R$ containing $A$ and $\mathcal{Q}$-open set $S$ containing $B$ with $R \cap S= \phi$.
\end{proof}
%%%%%%%%%%%%%%%%%%%%%%%%%%%%%%%%%%%%%%%%%%%%%%%%%%%%%%%
%%%%%%%%%%%%%%%%%%%%%%Th 3.2%%%%%%%%%%%%%%%%%%%%%%%%%%%%%%%%%
\begin{thm}
If the bispace $(X, \mathcal{P}_{1}, \mathcal{P}_{2})$ is strongly pairwise regular and satisfies the condition C$(2)$ given below, then the following statements are equivalent:\\
$(i)$ X is pairwise paracompact.\\
$(ii)$ Each pairwise open cover $\mathcal{C}$ of $X$ has a countably locally finite parallel refinement.\\
$(iii)$ Each pairwise open cover $\mathcal{C}$ of $X$ has a locally finite refinement.\\
$(iv)$  Each pairwise open cover $\mathcal{C}$ of $X$ has a locally finite refinement $\mathcal{B}$ such that if $B \subset C$ where $B \in \mathcal{B}$ and $C \in \mathcal{C}$, then $\mathcal{P}_{1}$-cl$(B) \cup \mathcal{P}_{2}$-cl$(B) \subset C$.\\
\end{thm}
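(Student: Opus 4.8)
The plan is to establish the cycle of implications $(i)\Rightarrow(ii)\Rightarrow(iii)\Rightarrow(iv)\Rightarrow(i)$, modelled on Michael's classical characterisation of paracompactness but with extra care at every step for the two features that separate a $\sigma$-space from a topological space: an arbitrary union of open sets need not be open, and a locally finite union of closed sets need not be closed. Condition C$(2)$ is to be invoked exactly where these defects would otherwise break the argument. The implication $(i)\Rightarrow(ii)$ is immediate, since a locally finite family is a one-term countable union of locally finite families, so a locally finite parallel refinement already witnesses $(ii)$.

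For $(ii)\Rightarrow(iii)$ I would start from a pairwise open cover $\mathcal{C}$ together with a countably locally finite parallel refinement $\mathcal{B}=\bigcup_{n}\mathcal{B}_{n}$, each $\mathcal{B}_{n}$ locally finite; after splitting $\mathcal{B}_{n}$ into its $\mathcal{P}_{1}$-open and $\mathcal{P}_{2}$-open parts and reindexing, each $\mathcal{B}_{n}$ may be assumed to consist of sets open in a single one of the two structures. Put $V_{n}=\bigcup\mathcal{B}_{n}$ (here C$(2)$ is used to keep $V_{n}$ open in the relevant structure) and, for $B\in\mathcal{B}_{n}$, set $B^{\ast}=B\setminus\bigcup_{i<n}V_{i}$. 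Then, just as in the topological case, $\mathcal{B}^{\ast}=\{B^{\ast}:B\in\mathcal{B}_{n},\ n\geq 1\}$ covers $X$ (use the least $n(x)$ with $x\in V_{n(x)}$), refines $\mathcal{C}$, and is locally finite in the required sense: for $x$ with $n=n(x)$, the set $V_{n}$ is a $\mathcal{P}_{\mathcal{C}x}$-open neighbourhood of $x$ missing every $B^{\ast}$ with $B\in\mathcal{B}_{i}$, $i>n$, and intersecting it with the neighbourhoods provided by the local finiteness of $\mathcal{B}_{1},\dots,\mathcal{B}_{n}$ leaves a neighbourhood meeting only finitely many members of $\mathcal{B}^{\ast}$.

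For $(iii)\Rightarrow(iv)$ the strong pairwise regularity of $X$ is used. Given a pairwise open cover $\mathcal{C}$, for each $x$ and each $C\in\mathcal{C}$ containing $x$, say $\mathcal{P}_{i}$-open, regularity of $(X,\mathcal{P}_{i})$ gives a $\mathcal{P}_{i}$-open $U\ni x$ with $\mathcal{P}_{i}\text{-cl}(U)\subset C$ and pairwise regularity gives a $\mathcal{P}_{i}$-open $G\ni x$ with $\mathcal{P}_{j}\text{-cl}(G)\subset C$ for $j\neq i$, so $U\cap G$ is a $\mathcal{P}_{i}$-open neighbourhood of $x$ with $\mathcal{P}_{1}\text{-cl}(U\cap G)\cup\mathcal{P}_{2}\text{-cl}(U\cap G)\subset C$; the family $\mathcal{D}$ of all such $U\cap G$ is a pairwise open refinement of $\mathcal{C}$ with this double-closure property. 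A locally finite refinement $\mathcal{B}$ of $\mathcal{D}$ obtained from $(iii)$ is then the refinement demanded in $(iv)$, using monotonicity of closure in a $\sigma$-space ($B\subset D$ gives $\overline{B}\subset\overline{D}$) and the fact that, by construction of $\mathcal{D}$, being $\mathcal{P}_{\mathcal{D}x}$-open is the same as being $\mathcal{P}_{\mathcal{C}x}$-open.

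The heart of the proof, and the step I expect to be the main obstacle, is $(iv)\Rightarrow(i)$. From a pairwise open cover $\mathcal{C}$ I would first apply $(iv)$ to get a locally finite refinement $\mathcal{A}$ with the closure property, then cover $X$ by neighbourhoods each meeting only finitely many members of $\mathcal{A}$ and apply $(iv)$ again to get a locally finite refinement $\mathcal{E}$ of that cover. For $A\in\mathcal{A}$ put $A^{\ast}=X\setminus\bigcup\{\mathcal{P}_{1}\text{-cl}(E)\cup\mathcal{P}_{2}\text{-cl}(E):E\in\mathcal{E},\ (\mathcal{P}_{1}\text{-cl}(E)\cup\mathcal{P}_{2}\text{-cl}(E))\cap A=\phi\}$; then, after intersecting each $A^{\ast}$ with a fixed member $C(A)$ of $\mathcal{C}$ containing $A$ and open in the same structure as $A$, the family $\{A^{\ast}\cap C(A):A\in\mathcal{A}\}$ should be the required locally finite parallel refinement of $\mathcal{C}$. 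The delicate points are: (a) $A^{\ast}$ is open in the structure matching $A$ only if the displayed union of closed sets is closed, and since in a $\sigma$-space even a locally finite union of closed sets need not be closed, this is precisely where C$(2)$ is indispensable; (b) the parallel condition has to be checked, i.e. each $A^{\ast}\cap C(A)$ must lie inside a member of $\mathcal{C}$ open in the same structure as $A$; and (c) local finiteness of $\{A^{\ast}\}$ follows because a neighbourhood meeting only finitely many $A\in\mathcal{A}$ can meet $A^{\ast}$ only for those $A$, but one must verify throughout that every neighbourhood constructed is $\mathcal{P}_{\mathcal{C}x}$-open. Threading this $\mathcal{P}_{\mathcal{C}x}$-bookkeeping consistently through the repeated use of $(iv)$, while keeping all the relevant unions open or closed via C$(2)$, is the part that demands the most care.
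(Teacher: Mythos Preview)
Your overall scheme --- the cycle $(i)\Rightarrow(ii)\Rightarrow(iii)\Rightarrow(iv)\Rightarrow(i)$ modelled on Michael's theorem, with strong pairwise regularity used in $(iii)\Rightarrow(iv)$ and condition C$(2)$ reserved for the openness problems in $(iv)\Rightarrow(i)$ --- is exactly the route the paper takes. Two points in your outline, however, are not right as written.

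First, in $(ii)\Rightarrow(iii)$ you invoke C$(2)$ ``to keep $V_{n}=\bigcup\mathcal{B}_{n}$ open''. Condition C$(2)$ does not assert that arbitrary unions of open sets are open; it only produces a $\mathcal{P}_{i}$-open set squeezed between a given set $M$ and the complement of a union of $\mathcal{P}_{i}$-closures, which is a different statement. In fact this step needs no C$(2)$ at all, and the paper does not use it here: instead of $V_{n(x)}$, use as neighbourhood the individual member $B\in\mathcal{B}_{n(x)}$ that actually contains $x$. This $B$ is genuinely open (it belongs to the pairwise open refinement), it is $\mathcal{P}_{\mathcal{C}x}$-open because the refinement is parallel, and since $B\subset V_{n(x)}$ it is disjoint from every $B^{\ast}$ with $B\in\mathcal{B}_{i}$, $i>n(x)$. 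Intersecting with the finitely many neighbourhoods coming from local finiteness of $\mathcal{B}_{1},\dots,\mathcal{B}_{n(x)}$ finishes the argument.

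Second, in $(iv)\Rightarrow(i)$ your clause (c) --- ``a neighbourhood meeting only finitely many $A\in\mathcal{A}$ can meet $A^{\ast}$ only for those $A$'' --- is false: since $A\subset A^{\ast}$, the set $A^{\ast}$ is in general much larger than $A$, and a $W_{x}$ meeting only finitely many $A$'s may well meet infinitely many $A^{\ast}$'s. The correct route, which is why you introduced $\mathcal{E}$ in the first place, goes through $\mathcal{E}$ rather than $\mathcal{A}$: take a $\mathcal{P}_{\mathcal{W}x}$-open (hence $\mathcal{P}_{\mathcal{C}x}$-open) neighbourhood $J_{x}$ meeting only finitely many members $E_{\lambda_{1}},\dots,E_{\lambda_{n}}$ of $\mathcal{E}$; these cover $J_{x}$, each $\mathcal{P}_{i}$-cl$(E_{\lambda_{j}})$ lies in some $W_{y}$ and therefore meets only finitely many $A\in\mathcal{A}$, and by construction $\mathcal{P}_{i}$-cl$(E_{\lambda})\cap A^{\ast}\neq\phi$ forces $\mathcal{P}_{i}$-cl$(E_{\lambda})\cap A\neq\phi$. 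Hence $J_{x}$ meets only finitely many $A^{\ast}$, and so only finitely many $A^{\ast}\cap C(A)$. This is precisely the argument in the paper; with these two repairs your proposal coincides with it.
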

\noindent C$(2) :$ If $M \subset X$ and $\mathcal{B}$ is a subfamily of $\mathcal{P}_{1} \cup \mathcal{P}_{2}$ such that $\mathcal{P}_{i}$-$cl(B) \cap M=\phi$, for all $B \in \mathcal{B}$, then  there exists a $\mathcal{P}_{i}$- open set $S$ such that $M \subset S \subset[\bigcup_{B\in \mathcal{B}}\mathcal{P}_{i}$-$cl(B)]^{c}$.
\begin{proof}
$(i) \Rightarrow (ii)$\\
Let $\mathcal{C}$ be a pairwise open cover of $X$. Let $\mathcal{U}$ be a locally finite parallel refinement of $\mathcal{C}$. Then the collection  $\mathcal{V}= \bigcup _{n=1} ^{\infty} \mathcal{V}_{n}$, where $\mathcal{V}_{n} = \mathcal{U}$ for all $n\in \mathbb{N}$, becomes the countably locally finite parallel refinement of $\mathcal{C}$.\\
$(ii) \Rightarrow (iii)$\\
We consider a pairwise open cover $\mathcal{C}$ of $X$. Let $\mathcal{V}$ be a parallel refinement of $\mathcal{C}$, such that $\mathcal{V}= \bigcup _{n=1} ^{\infty} \mathcal{V}_{n}$, where for each $n$ and for each $x$ there exists a $\mathcal{P}_{\mathcal{C}x}$-open neighborhood of $x$ intersecting only a finite number of members of $\mathcal{V}_{n}$. For each $n \in \mathbb{N}$, let us agree to write $\mathcal{V}_{n}$ as $\mathcal{V}_{n}= \{\mathcal{V}_{n \alpha} : \alpha \in \wedge_{n} \}$ and we consider $M_{n} = \bigcup _{\alpha \in \wedge_{n}} \mathcal{V}_{n \alpha}$, $n \in \mathbb{N}$. Clearly the collection $\{ M_{n} \}_{n \in \mathbb{N}}$ is a cover of $X$. Let $N_{n}= M_{n} - \bigcup _{k<n} M_{k}$. Clearly for $x \in X$ if $x \in M_{n}$, where $n$ is the least positive integer then $x \in N_{n}$ and hence $\{ N_{n} : n \in \mathbb{N}\}$ covers $X$. Also $N_{n} \subset M_{n}$ for every $n$, so $\{ N_{n} : n \in \mathbb{N}\}$ is a refinement of  $\{ M_{n} : n \in \mathbb{N}\}$. The family  $\{ N_{n} : n \in \mathbb{N}\}$ is locally finite because for $x \in X$ there exists a $\mathcal{V}_{n \alpha} \in \mathcal{V}$ which can intersects only some or all of $N_{1}, N_{2} \dots N_{n}$. Now the collection $\{ \mathcal{V}_{n \alpha} \cap N_{n} : \alpha \in \wedge_{n}, n \in \mathbb{N} \}$ covers $X$ as if $x \in \mathcal{V}_{p \alpha}$ for the least positive integer $p$ then $x \in N_{p}$ and hence $x \in \mathcal{V}_{p \alpha} \cap N_{p}$. So clearly $\{ \mathcal{V}_{n \alpha} \cap N_{n} : \alpha \in \wedge_{n}, n \in \mathbb{N} \}$ is a refinement of $\mathcal{V}$ and hence of $\mathcal{C}$. Also for $x \in X$ there exists a $\mathcal{P}_{\mathcal{C}x}$-open neighborhood $\mathcal{V}_{ k \alpha}$ intersecting only a finite number of members of $\{ N_{n} : n \in \mathbb{N}\}$ and hence it intersects only a finite number of members of $\{ \mathcal{V}_{n \alpha} \cap N_{n} : \alpha \in \wedge_{n}, n \in \mathbb{N} \}$.\\
$(iii) \Rightarrow (iv)$\\
Let $\mathcal{C}$ be a pairwise open cover of $X$. Let $x \in X$ and suppose that $x \in C_{x}$ for some $C_{x} \in \mathcal{C}$. Without any loss of generality let $C_{x} \in \mathcal{P}_{1}$. Then $x \notin C_{x}^{c}$ and hence by using the condition of strongly pairwise regularity of $X$ there exists a $\mathcal{P}_{1}$-open set $D_{1}$ containing $x$ and a $\mathcal{P}_{1}$-open set $D_{1}^{'}$ containing $C_{x}^{c}$ with $D_{1} \cap D_{1}^{'}= \phi$. Now $(D_{1}^{'})^{c} \subset C_{x}$ and hence $(D_{1}^{'})^{c}$ is a $\mathcal{P}_{1}$-closed set such that $x \in (D_{1}^{'})^{c} \subset C_{x}$. Therefore $\mathcal{P}_{1}$-cl$(D_{1}) \subset C_{x}$ as $D_{1} \subset (D_{1}^{'})^{c} \subset C_{x}$. Again $x \notin D_{1}^{c}$, a $\mathcal{P}_{1}$-closed set and hence by pairwise regularity of $X$ there exists a $\mathcal{P}_{1}$-open set $D_{2}$ containing $x$ and a $\mathcal{P}_{2}$-open set $D_{2}^{'}$ containing $D_{1}^{c}$ with $D_{2} \cap D_{2}^{'}= \phi$. Now $D_{2} \subset (D_{2}^{'})^{c}$ and $D_{2} \subset (D_{2}^{'})^{c} \subset D_{1} \subset C_{x}$. Hence $\mathcal{P}_{2}$-cl$(D_{2}) \subset C_{x}$ and also $D_{2} \subset D_{1}$. Therefore $\mathcal{P}_{1}$-cl$(D_{2}) \subset \mathcal{P}_{1}$-cl$(D_{1})$ and hence $\mathcal{P}_{1}$-cl$(D_{2}) \cup \mathcal{P}_{2}$-cl$(D_{2}) \subset C_{x}$. Similarly if $C_{x} \in \mathcal{P}_{2}$ then there exists a $\mathcal{P}_{2}$-open set $D_{2}$ containing $x$ such that $\mathcal{P}_{1}$-cl$(D_{2}) \cup \mathcal{P}_{2}$-cl$(D_{2}) \subset C_{x}$. Let us denote $D_{2}$ by a general notation $G_{x}$ and then we can write $\mathcal{P}_{1}$-cl$(G_{x}) \cup \mathcal{P}_{2}$-cl$(G_{x}) \subset C_{x}$. Then, since $\mathcal{C}$ be a pairwise open cover $\{ G_{x} : x \in X , C_{x} \in  \mathcal{C}  \}$ is a pairwise open cover of $X$ which refines of $\mathcal{C}$. Therefore by $(iii)$ there exists a locally finite refinement $\mathcal{B}$ of $\{ G_{x} : x \in X \}$ and hence of $\mathcal{C}$. If $B \in \mathcal{B}$ then for some $G_{x}$ we have $B \subset G_{x} \subset C_{x}$ and so $\mathcal{P}_{1}$-cl$(B) \cup \mathcal{P}_{2}$-cl$(B) \subset \mathcal{P}_{1}$-cl$(G_{x}) \cup \mathcal{P}_{2}$-cl$(G_{x}) \subset C_{x}$.\\
$(iv) \Rightarrow (i)$\\
Let $\mathcal{C}$ be a pairwise open cover of $X$ and without any loss of generality we assume that there does not exists any element of $\mathcal{C}$ which is both $\mathcal{P}_{1}$-open and $\mathcal{P}_{2}$-open. So there exists a locally finite refinement $\mathcal{A}$ of $\mathcal{C}$. For $x \in X$ we must have a $C \in \mathcal{C}$ containing $x$. Let us suppose $C$ is $\mathcal{P}_{i}$-open. Let $W_{x}$ be a $\mathcal{P}_{i}$-open neighborhood of $x$ intersecting only a finite number of elements of $\mathcal{A}$. So the collection $\mathcal{W}=\{W_{x} : x\in X\}$ is a pairwise open cover of $X$ and let $E= \{E_{\lambda} : \lambda \in \wedge\}$ be a locally finite refinement of $\mathcal{W}$ such that if $E_{\lambda} \subset W_{x}$ then $\mathcal{P}_{1}$-cl$(E_{\lambda}) \cup \mathcal{P}_{2}$-cl$(E_{\lambda}) \subset W_{x}$. Now for $A \in \mathcal{A}$ we consider $C_{A} \in \mathcal{C}$ such that $A \subset C_{A}$. Then if $C_{A}$ is $\mathcal{P}_{i}$-open, then we consider the set $F_{A} =\cup \{ \mathcal{P}_{i}$-$cl(E_{\lambda}) : E_{\lambda} \in E, \mathcal{P}_{i}$-$cl(E_{\lambda}) \cap A=\phi \}$. Let $G_{A}= X \setminus F_{A}$, then by the given condition C$(2)$ there exists a $\mathcal{P}_{i}$-open set $S_{A}$ such that $A \subset S_{A} \subset G_{A}$. We write $H_{A}=S_{A} \cap C_{A}$ and since $A \subset H_{A}$ , the collection $\{H_{A} : A \in \mathcal{A} \}$ covers $X$. Also $H_{A} \subset C_{A}$ and $H_{A}$ is $\mathcal{P}_{i}$-open. Thus $\{H_{A} : A \in \mathcal{A} \}$ is a parallel refinement of $\mathcal{C}$. Now we show that $\{H_{A} : A \in \mathcal{A} \}$ is a locally finite refinement of $\mathcal{C}$. \\ 
%%%%%%%%%%%%%%%%%%%%%%%%%%%%%%%%%
\indent We show that if $M$ is a $\mathcal{P}_{\mathcal{W}x}$-open set containing $x$ then it is also a $\mathcal{P}_{\mathcal{C}x}$-open set containing $x$. Let $M$ is a $\mathcal{P}_{\mathcal{W}x}$-open set containing $x$ and $M$ is $\mathcal{P}_{i}$-open set then $x$ must be contained in a $\mathcal{P}_{i}$-open set $W_{x}$ in $\mathcal{W}$. So there exists a $\mathcal{P}_{i}$-open set $C$ in $\mathcal{C}$ containing $x$. This shows that $M$ is also a $\mathcal{P}_{\mathcal{C}x}$-open set containing $x$.\\ 
%%%%%%%%%%%%%%%%%%%%%%%
\indent Now let $x \in X$ and $J_{x}$ be a $\mathcal{P}_{\mathcal{W}x}$-open neighborhood of $x$ intersecting only a finite numbers of members $ E_{{\lambda}_{1}}, E_{{\lambda}_{2}} \dots E_{{\lambda}_{n}} $ of $E$. Hence $J_{x}$ is also a $\mathcal{P}_{\mathcal{C}x}$-open neighborhood of $x$ intersecting only a finite numbers of members $ E_{{\lambda}_{1}}, E_{{\lambda}_{2}} \dots E_{{\lambda}_{n}} $ of $E$.
%%%%%%%%%%%%%%%%%%%%%%%%%%%%%%%%%%%
Clearly $J_{x}$ can be covered by these members of $E$. Now each $ E_{{\lambda}_{i}}$ is contained in some $W_{x}$ with $\mathcal{P}_{1}$-cl$(E_{{\lambda}_{i}}) \cup \mathcal{P}_{2}$-cl$(E_{{\lambda}_{i}}) \subset W_{x}$. Also $W_{x}$ can intersects only a finite number of members of $\mathcal{A}$. Hence each $\mathcal{P}_{1}$-cl$(E_{{\lambda}_{i}})$ or $\mathcal{P}_{2}$-cl$(E_{{\lambda}_{i}})$ can intersect only a finite number of sets in $\mathcal{A}$. So each $\mathcal{P}_{1}$-cl$(E_{{\lambda}_{i}})$ or $\mathcal{P}_{2}$-cl$(E_{{\lambda}_{i}})$ can intersect only a finite number of sets in $\{G_{A} : A \in \mathcal{A} \}$. Therefore $J_{x}$ can intersect only a finite number of sets of $\{ G_{A} : A \in \mathcal{A}\}$. Now $\{H_{A} : A \in \mathcal{A}\}$ covers $X$ and $H_{A} \subset G_{A}$, hence $J_{x}$ can intersect only a finite number of sets in $\{H_{A} : A \in \mathcal{A}\}$. Also $H_{A} \subset C_{A}$ and hence clearly $\{H_{A} : A \in \mathcal{A}\}$ refines $\mathcal{C}$. Therefore $\{H_{A} : A \in \mathcal{A}\}$ is a locally finite parallel refinement of $\mathcal{C}$.
\end{proof}
%%%%%%%%%%%%%%%%%%%%%%%%%%%%%%%%%%%%%%%%%%%%%%%%
%%%%%%%%%%%%%%%%%%Th 3.3%%%%%%%%%%%%%%%%%%%%%%%%%%%%%%
\begin{thm}
Let $\mathcal{A}$ be a locally finite collection in a $\sigma$-space $X$. Then the collection $\mathcal{B} =\{ \overline{A} \}_{A \in \mathcal{A}}$ is also locally finite.
\end{thm}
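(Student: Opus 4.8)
The plan is to reduce the whole statement to one elementary observation about closures in a $\sigma$-space: an open set disjoint from a set $A$ is automatically disjoint from $\overline{A}$. So first I would record the following. If $G$ is open and $G \cap A = \phi$, then $X \setminus G$ is a closed set containing $A$, whence $\overline{A} \subseteq X \setminus G$ by the definition of $\overline{A}$ as the intersection of all closed sets containing $A$; thus $G \cap \overline{A} = \phi$. Contrapositively, $G \cap \overline{A} \neq \phi$ implies $G \cap A \neq \phi$. It is worth noting that this uses only that the complement of an open set is closed, and not the countable-intersection axiom of a $\sigma$-space, so the fact that $\overline{A}$ need not itself be a closed set causes no trouble here.

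Next I would invoke local finiteness of $\mathcal{A}$ at an arbitrary point. Fix $x \in X$. By hypothesis $x$ has a neighborhood $N$ meeting only finitely many members of $\mathcal{A}$; picking an open $G$ with $x \in G \subseteq N$, the open set $G$ too meets only finitely many members of $\mathcal{A}$, say $G \cap A \neq \phi$ precisely for $A$ in a finite subfamily $\{A_{1}, \dots, A_{n}\} \subseteq \mathcal{A}$.

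Finally I would combine the two pieces. By the first observation, any $A \in \mathcal{A}$ with $G \cap \overline{A} \neq \phi$ satisfies $G \cap A \neq \phi$, hence lies in $\{A_{1}, \dots, A_{n}\}$; in other words $\{A \in \mathcal{A} : G \cap \overline{A} \neq \phi\} \subseteq \{A \in \mathcal{A} : G \cap A \neq \phi\}$, which is finite. So the neighborhood $N \supseteq G$ of $x$ meets only finitely many members of $\mathcal{B} = \{\overline{A}\}_{A \in \mathcal{A}}$, and since $x$ was arbitrary, $\mathcal{B}$ is locally finite. I do not expect a genuine obstacle here; the only subtlety is bookkeeping — since $\mathcal{B}$ is an indexed family, ``meets only finitely many members'' should be read via the index set, and the displayed inclusion of index sets handles that automatically.
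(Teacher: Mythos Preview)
Your argument is correct and is essentially the paper's own proof: both reduce to the observation that an open set disjoint from $A$ is disjoint from $\overline{A}$, then apply this to the neighborhood witnessing local finiteness of $\mathcal{A}$ at an arbitrary point. One small slip in your final sentence: it is $G$, not $N$, that you have shown meets only finitely many $\overline{A}$ (since $G\subseteq N$ the inclusion goes the wrong way for $N$), but $G$ is itself a neighborhood of $x$, so the conclusion stands.
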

\begin{proof}
Let $x \in X$ and $U$ be a neighborhood of $x$ intersecting only a finite number of members of $\mathcal{A}$. Now if for $A \in \mathcal{A}$, $A \cap U = \phi$ then $A \subset U^{c}$ and hence $A \subset \overline{A} \subset U^{c}$. Therefore $\overline{A} \subset U^{c}$ so $\overline{A} \cap U= \phi$. Therefore $U$ can intersect only a finite number of members of $\mathcal{B}$. 
\end{proof}
%%%%%%%%%%%%%%%%%%%%%Th 3.4%%%%%%%%%%%%%%%%%%%%%%%%%%%%%%%
\begin{thm}
In a space any sub collection of a locally finite collection of sets is locally finite. 
\end{thm}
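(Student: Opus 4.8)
The plan is to read off the conclusion directly from the definition of local finiteness, using nothing more than the fact that a subset of a finite set is finite. Let $\mathcal{A}$ be a locally finite collection in a space $X$ and let $\mathcal{A}'$ be any subcollection of $\mathcal{A}$. Fix an arbitrary point $x \in X$. By local finiteness of $\mathcal{A}$, there is a neighborhood $U$ of $x$ such that the set $\mathcal{F} = \{\, A \in \mathcal{A} : A \cap U \neq \phi \,\}$ is finite.

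Next I would observe that $\{\, A \in \mathcal{A}' : A \cap U \neq \phi \,\} \subseteq \mathcal{F}$, simply because every member of $\mathcal{A}'$ is a member of $\mathcal{A}$; hence this set, being a subcollection of the finite collection $\mathcal{F}$, is itself finite. Thus $U$ is a neighborhood of $x$ meeting only finitely many members of $\mathcal{A}'$. Since $x$ was arbitrary, $\mathcal{A}'$ is locally finite in $X$, which completes the argument.

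I do not anticipate any genuine obstacle here: the statement is an immediate consequence of the definition given earlier (Definition 3.1), and the only thing worth making explicit is the monotonicity step — that the collection of members of $\mathcal{A}'$ hitting $U$ is contained in the collection of members of $\mathcal{A}$ hitting $U$. No properties special to a $\sigma$-space (as opposed to a topological space) are needed, since the notion of neighborhood and the counting argument are identical in both settings.
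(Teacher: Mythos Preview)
Your proof is correct and uses the same core idea as the paper: the neighborhood $U$ that witnesses local finiteness for $\mathcal{A}$ also witnesses it for any subcollection, since a subset of a finite set is finite. The only difference is that the paper additionally singles out the case in which $U$ meets no member of the subcollection $\mathcal{B}$ and manufactures a larger set $M\cup U$ to force at least one intersection; you rightly omit this detour, since ``finitely many'' in the definition of local finiteness already includes zero.
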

\begin{proof}
Let $\mathcal{A}$ be a locally finite collection of sets in a space $X$ and $\mathcal{B} = \{ B_{\alpha}  : \alpha \in \Lambda, \text{an indexing set}\}$ be a sub collection of $\mathcal{A}$. If $x \in X$ then there exists a neighborhood $U$ of $X$ intersecting only a finite number of sets in $\mathcal{A}$. Hence $U$ can not intersect infinite number of sets in $\mathcal{B}$. If $U$ does not intersect any member of $\mathcal{B}$, then consider $B_{p} \in \mathcal{B}$ such that $M=B_{p} \setminus \bigcup _{\alpha \in \Lambda} ^{\alpha \neq p} B_{\alpha}\neq \phi$. Then $M \cup U$ is a neighborhood of $x$ intersecting only $B_{p}$ of $\mathcal{B}$. Hence $\mathcal{B}$ is locally finite.
\end{proof}
%%%%%%%%%%%%%%%%%%%%%%%%%%%%%%%%%%%%%%%%%%%%%%%%
It has been discussed in \cite{DJ66} that in a regular topological space $X$ the following four conditions are equivalent:\\
$(i)$ The space $X$ is paracompact.\\
$(ii)$ If $\mathcal{U}$ is a open cover of $X$ then it has an open refinement $\mathcal{V}=\bigcup_{n=1} ^{\infty} V_{n}$, where $V_{n}$ is a locally finite collection in $X$ for each $n$.\\ 
$(iii)$For every open cover of the space $X$ there exists a locally finite refinement of it.\\
$(iv)$For every open cover of the space $X$ there exists a closed locally finite refinement of it.\\
In a $\sigma$-space it is not true because closure of a set may not be closed. But a similar kind of result has been discussed below.
%%%%%%%%%%%%%%%%%%%%%%%%%%%%%%%%%%%
%%%%%%%%%%%%%%%%%%%%Th 3.5%%%%%%%%%%%%%%%%%%%%%%%%%%%%%%%%%%%%%%%
\begin{thm}
In a regular space $X$ for the following four conditions we have $(i)\Rightarrow (ii) \Rightarrow (iii) \Rightarrow (iv)$:\\
$(i)$ The space $X$ is paracompact.\\
$(ii)$ If $\mathcal{U}$ is a open cover of $X$ then it has an open refinement $\mathcal{V}=\bigcup_{n=1} ^{\infty} V_{n}$, where $V_{n}$ is a locally finite collection in $X$ for each $n$.\\ 
$(iii)$For every open cover of the space $X$ there exists a locally finite refinement of it.\\
$(iv)$For every open cover $\mathcal{A}$ of the space $X$ there exists a locally finite refinement $S= \{ S_{\alpha} : \alpha \in \Lambda \}$ of it such that $\{ \overline{S_{\alpha}} : S_{\alpha} \in S\}$ is also a locally finite refinement of it, $\Lambda$ being an indexing set.\\
\end{thm}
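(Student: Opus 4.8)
The plan is to prove the three implications separately, following the classical chain for regular topological spaces (all four conditions being equivalent there, as recalled above from \cite{DJ66}) but taking care that in a $\sigma$-space only \emph{countable} unions of open sets are guaranteed to be open; regularity of $X$ enters only in the last step. The implication $(i)\Rightarrow(ii)$ is immediate: if $\mathcal{V}$ is a locally finite open refinement of a given open cover $\mathcal{U}$, then writing $\mathcal{V}=\bigcup_{n=1}^{\infty}V_n$ with $V_1=\mathcal{V}$ and $V_n=\emptyset$ for $n\geq 2$ already displays $\mathcal{V}$ as a countable union of locally finite families.

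For $(ii)\Rightarrow(iii)$ I would run, now for a single $\sigma$-space, the same ``shrinking'' construction already used in the proof of $(ii)\Rightarrow(iii)$ of the theorem on strongly pairwise regular bispaces above. Given an open cover $\mathcal{U}$, take the open refinement $\mathcal{V}=\bigcup_{n=1}^{\infty}V_n$ supplied by $(ii)$, write $V_n=\{V_{n\alpha}:\alpha\in\wedge_n\}$, and put $M_n=\bigcup_{\alpha\in\wedge_n}V_{n\alpha}$ and $N_n=M_n\setminus\bigcup_{k<n}M_k$. Then $\{N_n\}_{n\in\mathbb{N}}$ covers $X$ (assign to each point the least $n$ with $x\in M_n$), so $\mathcal{B}=\{V_{n\alpha}\cap N_n:\alpha\in\wedge_n,\ n\in\mathbb{N}\}$ covers $X$ and refines $\mathcal{V}$, hence $\mathcal{U}$. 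To see that $\mathcal{B}$ is locally finite, fix $x$, let $p$ be least with $x\in M_p$, and choose $\alpha$ with $x\in V_{p\alpha}$; since $V_{p\alpha}\subseteq M_p$ and $N_k\cap M_p=\emptyset$ for $k>p$, the open set $V_{p\alpha}$ can meet members of $\mathcal{B}$ only on the finitely many ``levels'' $k\leq p$, while local finiteness of each $V_k$ gives an open neighbourhood $U_k$ of $x$ meeting only finitely many members of $V_k$. The finite intersection $V_{p\alpha}\cap U_1\cap\cdots\cap U_p$ is then an open neighbourhood of $x$ meeting only finitely many members of $\mathcal{B}$; this is the one place where it matters that finite intersections of open sets are open in a $\sigma$-space.

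For $(iii)\Rightarrow(iv)$, let $\mathcal{A}$ be an open cover. For each $x$ fix $A_x\in\mathcal{A}$ with $x\in A_x$; since $X$ is regular and $A_x^{c}$ is a closed set not containing $x$, there are disjoint open sets $G_x\ni x$ and $W\supseteq A_x^{c}$, and then $\overline{G_x}\subseteq W^{c}\subseteq A_x$. The family $\{G_x:x\in X\}$ is an open cover refining $\mathcal{A}$, so by $(iii)$ it has a locally finite refinement $S=\{S_\alpha:\alpha\in\Lambda\}$, which is in particular a locally finite refinement of $\mathcal{A}$. Picking for each $\alpha$ a point $x(\alpha)$ with $S_\alpha\subseteq G_{x(\alpha)}$, we get $\overline{S_\alpha}\subseteq\overline{G_{x(\alpha)}}\subseteq A_{x(\alpha)}$, so $\{\overline{S_\alpha}:\alpha\in\Lambda\}$ covers $X$ and refines $\mathcal{A}$; and it is locally finite by the earlier theorem asserting that the closures of the members of a locally finite family again form a locally finite family. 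Hence $S$ is the refinement required in $(iv)$.

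The only genuinely delicate point is the local-finiteness check in $(ii)\Rightarrow(iii)$, where one must combine the ``layer'' bound (a set lying inside $M_p$ meets no $N_k$ with $k>p$) with local finiteness inside each of the finitely many surviving layers. No idea beyond those already present in the earlier results is needed; note that $(iv)\Rightarrow(i)$ is not claimed precisely because the closure of a set in a $\sigma$-space need not be closed, so the usual closed-refinement route back to paracompactness is unavailable.
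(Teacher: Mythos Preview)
Your proof is correct and follows essentially the same route as the paper: the same ``layered shrinking'' $M_n$, $N_n$, $\{V_{n\alpha}\cap N_n\}$ for $(ii)\Rightarrow(iii)$, and the same regularity-plus-closure-lemma argument for $(iii)\Rightarrow(iv)$. The only cosmetic difference is that for local finiteness in $(ii)\Rightarrow(iii)$ you use $V_{p\alpha}\subset M_p$ directly to cut off levels $k>p$, while the paper first records local finiteness of $\{D_n\}$ and then intersects; the two arguments are the same in substance.
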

\begin{proof}
$(i)\Rightarrow (ii)$\\
The proof is straightforward.\\
$(ii) \Rightarrow (iii)$\\
Let $\mathcal{A}$ be an open cover of $X$. Then by $(ii)$ there exists an open refinement $\mathcal{B}=\bigcup_{n=1} ^{\infty} B_{n}$ where $B_{n}$ is a locally finite collection in $X$ for each $n$. Let $B_{n}=\{B_{n \alpha} : \alpha \in \Lambda_{n} \}$ and $C_{n}= \bigcup _{\alpha \in \Lambda_{n}} B_{n \alpha}$, $\Lambda_{n}$ being an indexing set. Now clearly the collection $\{ C_{n} \}$ covers $X$. Let us consider $D_{n} = C_{n} \setminus \bigcup _{k <n} C_{k}$. For $x \in X$, suppose that $k$ be the least natural number for which $x \in B_{k \alpha}$, then $B_{k \alpha}$ can intersect at most $k$ members $D_{1}, D_{2} \dots D_{k}$ of $\{ D_{n}: n\in \mathbb{N} \}$. Hence $\{ D_{n}: n\in \mathbb{N} \}$ is a locally finite refinement of $\{ C_{n}: n\in \mathbb{N} \}$. Now we show that $M= \{ D_{n} \cap B_{n \alpha} : n \in \mathbb{N}, \alpha \in \Lambda_{n} \}$ is a locally finite refinement of $\mathcal{B}$. For $n \in \mathbb{N}$ we have $\bigcup _{\alpha \in \Lambda_{n}}(D_{n} \cap B_{n \alpha}) =D_{n} \cap (\bigcup _{\alpha \in \Lambda_{n}}B_{n \alpha}) =D_{n} \cap C_{n} =D_{n}$ as $D_{n} \subset C_{n}$. Also $D_{n}$ covers $X$ and hence $\bigcup_{n \in \mathbb{N}}\bigcup _{\alpha \in \Lambda_{n}}(D_{n} \cap B_{n \alpha} ) =X$. Let $x \in X$ then there exists an  neighborhood $U$ of $x$ intersecting only a finite number members $D_{i_{1}}, D_{i_{2}} \dots D_{i_{n}}(say)$ of $\{ D_{n}: n\in \mathbb{N} \}$. Also there exists an open set $U_{i_{n}}$ intersecting only a finite number of members of $B_{i_{n}}$. Now $U \cap (\bigcap_{k=1}^{n}U_{i_{k}})$ is an neighborhood of $x$ intersecting only a finite numbers of $M$ as $M$ covers $X$. Also $D_{n} \cap B_{n \alpha} \subset B_{n \alpha}$ and hence $M= \{ D_{n} \cap B_{n \alpha} : n \in \mathbb{N}, \alpha \in \Lambda_{n} \}$ is a locally finite refinement of $\mathcal{B}$. And also since $D_{n} \cap B_{n \alpha} \subset B_{n \alpha} \subset A$ for some $A \in \mathcal{A}$, $M= \{ D_{n} \cap B_{n \alpha} : n \in \mathbb{N}, \alpha \in \Lambda_{n} \}$ is a locally finite refinement of $\mathcal{A}$.\\
$(iii) \Rightarrow (iv)$\\
Let $\mathcal{U}$ be an open cover of $X$. Now for $x \in X$ we have a $U_{x} \in \mathcal{U}$ such that $x \in U_{x}$. So $x \notin (U_{x})^{c}$ and hence by regularity of $X$, there exist disjoint open sets $P_{x}$ and $Q_{x}$ containing $x$ and $(U_{x})^{c}$ respectively.  Hence $x \in P_{x} \subset (Q_{x})^{c} \subset U_{x}$ and clearly $x \in \overline{P_{x}} \subset U_{x}$. Now $P=\{ P_{x}: x\in X \}$ is an open cover of $X$ and by $(iii)$ it has a locally finite refinement $S= \{ S_{\alpha} : \alpha \in \Lambda, \text{an indexing set} \} \text{(say)}$. Also the collection $\{\overline{S_{\alpha}} : S_{\alpha} \in S\}$ is locally finite by previous lemma. Now for $\alpha \in \Lambda$, $S_{\alpha}\subset P_{x} \subset U_{x}$ for some $P_{x} \in P$ and hence $\overline{S_{\alpha}} \subset \overline{P_{x}} \subset U_{x}$ for some $U_{x} \in \mathcal{U}$. Therefore $S$ is a locally finite refinement of $\mathcal{U}$ such that $\{\overline{S_{\alpha}} : S_{\alpha} \in S \}$ is also a locally finite refinement of $\mathcal{U}$.\\
\end{proof}
%%%%%%%%%%%%%%%%%%%%%%%%%%%%%%%%%%%%%%%%%%%%%%%%%
%%%%%%%%%%%%%%%%%%%%%%%%%%%%%%%%%%%%%%%%%%%%%%%%%%%%%%%%%%%%
We have discussed some results associated with paracompactness in a $\sigma$-space because our motivation was to establish the statement ``If $(X, \mathcal{P}_{1}, \mathcal{P}_{2})$ is a pairwise paracompact bispace with $(X, \mathcal{P}_{2})$ regular, then every $\mathcal{P}_{1}$-$F_{\sigma}$ proper subset is $\mathcal{P}_{2}$ paracompact''. This has been discussed in a bitopological space \cite{MKBOSE}. But we have failed due to the fact that arbitrary union of open sets in a $\sigma$-space may not be open.
%%%%%%%%%%%%%%%%%%%%%%%%%%%%%%%%%%%%%%%%%%%%%%%%%
%%%%%%%%%%%%%%%%%%%%%%%%%%%%%%%%%%%%%%%%%%%%%%%%%%%%%%%%%%%%%%%%%%%%%%%%%%%%%%%%%%%%%%%%%%%%%%%%%%%%
%%%%%%%%%%%%%%%%%%%%%%%%%%%%%%%%%%%%%%%%%%%%%%%%%%

\end{document}